\documentclass[12pt]{article}

% Page setup
%\hoffset=-0.5truecm \voffset=-2.1truecm \textwidth=15truecm
%\textheight=24truecm
%\textheight=8.5truein  %American standard

\textheight=23cm
\textwidth=13.5cm
\hoffset=0cm
\parindent=16pt

\usepackage{amsmath,amsthm}
\usepackage{amssymb,latexsym}

% Numbered objects of "theorem" style (text italicized).
\newtheorem{theorem}{Theorem}[section]

% A numbered theorem with a fancy name:

% Numbered objects of "non-theorem" style (text roman):
\theoremstyle{definition}
\newtheorem{defin}[theorem]{Definition}
\newtheorem{rem}[theorem]{Remark}
\newtheorem{exa}[theorem]{Example}

% An unnumbered remark:

% Equations numbered by section:
\numberwithin{equation}{section}

\newcommand{\PSHG}{{\operatorname{PSHG}}}
\newcommand{\PSH}{{\operatorname{PSH}}}
\newcommand{\PSHS}{{\operatorname{PSHS}}}

\newcommand{\CCC}{{\operatorname{C_+}}}
\newcommand{\cl}{{\operatorname{cl}}}
\newcommand{\D}{{\mathbb D}}
\newcommand{\BB}{{\mathbb B}}

\renewcommand{\Bbb}{\mathbb}

\newcommand{\Rn}{{ \Bbb R}^n}

\newcommand{\Rnp}{{ \Bbb R}_+^n}

\newcommand{\C}{{\Bbb  C}}
\newcommand{\Cn}{{\Bbb  C\sp n}}

\newcommand{\vph}{\varphi}
\newcommand{\fu}{{\mathfrak u}}
\newcommand{\fv}{{\mathfrak v}}

\title{Extreme plurisubharmonic singularities}
\author{Alexander Rashkovskii}
\date{}

\begin{document}

\maketitle

\begin{abstract}
A plurisubharmonic singularity is {\it extreme} if it cannot be represented as the sum of non-homothetic singularities. A complete characterization of such singularities is given for the case of homogeneous singularities (in particular, those determined by generic holomorphic mappings) in terms of decomposability of certain convex sets in $\Rn$. Another class of extreme singularities is presented by means of a notion of relative type.
\end{abstract}

\section{Introduction}
Let $C$ be a convex cone of a vector space $V$. A point $v\in C$ us called {\it extreme} if the relation $v=v_1+v_2$ for $v_i\in C$ implies $v_i=\lambda_iv$ with $\lambda_i\ge0$, $i=1,2$. The set of all extreme points plays an important role due to Choquet's representation theorem. The structure of this set depends on the geometry of the cone, to be investigated in each concrete situation.

In complex analysis this task arises in various contexts. Concerning pluripotential theory, we refer here to papers \cite{Lel}, \cite{De16} on extremal currents, and especially to paper \cite{CeThor} where different types of extremal plurisubharmonic functions were considered; in particular, classical single pole pluricomplex Green functions were shown to be extreme. In \cite{CaWi}, Green functions with several poles were considered.

In this note, we work with the cone of {\it plurisubharmonic singularities} at a fixed point on a complex manifold (basically, $0\in\Cn$), that is, the equivalence classes of asymptotics of plurisubharmonic functions at that point. By using the technique of local indicators from \cite{LeRa}, we obtain a necessary and sufficient condition for a 'homogeneous' singularity to be extreme, in terms of decomposability of certain convex sets in $\Rn$ (Theorem~\ref{theo:main}). Another class of extreme singularities, related to plurisubharmonic valuations \cite{BFaJ}, is described in Theorem~\ref{theo:main2} with the help of a notion of relative type introduced in \cite{R7}. Both classes contain the logarithmic singularity, as well as other 'standard' singularities. In Section~4 we apply this to extreme functions.

\section {Plurisubharmonic singularities}
For basics on plurisubharmonic functions, see, e.g., \cite{Kl}.

Let $\PSHG_p$ be the cone of germs of all plurisubharmonic functions at a point $p$ of a complex manifold. A plurisubharmonic germ is {\it
singular} at $p$ if it is not bounded (from below) in any its neighborhood. The asymptotic behavior of a plurisubharmonic function near its singularity point can be very
complicated.

We say that $u\sim v$ if
$u(z)= v(z)+O(1)$ for $z\to p$. The equivalence
class ${\cl}(u)$ is called the {\it plurisubharmonic
singularity} of $u$ \cite{R9} (in \cite{Zah}, a closely related object was introduced under the name {\it standard singularity}). The collection of all plurisubharmonic singularities
at $p$ is denoted by $\PSHS_p$. Until the last section, we assume $p=0\in\Cn$.

Plurisubharmonic singularities form a convex cone whose extreme rays we will study.

\subsection{Characteristics of singularities}\label{subs:char}

A fundamental characteristic of a singularity $\fu\in
\PSHS_0$ is its {\it Lelong number}
$$ \nu(\fu)= \liminf_{z\to 0}\frac{u(z)}{\log|z|}$$
for any $u\in\fu$ (it is independent of the choice of the representative).  If $f$ is a holomorphic function, then
$\nu(\log|f |)$ equals the multiplicity of $f$ at $0$.

\medskip
A refined version, due to Kiselman \cite{Kis2} (see also \cite{Kis3}), is
the {\it directional Lelong number} in a
direction $a\in\mathbb R_+^n$ (that is, $a_1,\ldots,a_n>0$),
$$\nu(\fu,a)= \liminf_{z\to 0}\frac{u(z)}{\phi_a(z)},\quad u\in\fu,$$
where
\begin{equation}\label{phi}
\phi_a(z)=\max_k \,a_k^{-1}\log|z_k|.\end{equation} In particular, $\nu(\fu)=\nu(\fu,(1,\ldots,1))$.

For polynomials or, more generally, analytic functions $f=\sum c_Jz^J$,
it can be computed as
\begin{equation}\label{index}\nu(\log|f|,a)=\inf\{\langle a,J\rangle:c_J\neq 0\},\end{equation}
the expression in the right-hand side being known in number theory
as the {\it index} of $f$ with respect to the weight $a$, while in commutative algebra it is called  a {\it monomial valuation}.

\medskip
Even more general characteristic was introduced in \cite{R7}.  Recall that an isolated singularity $\vph\in\PSHS_0$ is called {\it maximal} if there exists a representative that is a maximal plurisubharmonic function on a punctured neighborhood of $0$. A {\it relative type} of $\fu$ with respect to a maximal singularity $\vph$ is
\begin{equation}\label{eq:rtype}\sigma(\fu,\varphi)=\liminf_{z\to 0}\frac{u(z)}{\varphi(z)}, \quad u\in\fu.\end{equation}
Its counterpart in algebra (in case of both $\fu$ and $\vph$ with algebraic or analytic singularity) is {\it asymptotic Samuel function}. Note that $\sigma(\fu,\phi_a)=\nu(\fu,a)$.

The relative type gives an upper bound for any $u\in\fu$:
\begin{equation}\label{eq:rbound} u\le\sigma(\fu,\vph)\vph+O(1).\end{equation}

\subsection{Indicators and Newton polyhedra}

The function $t\mapsto\psi_\fu(t)=-\nu(\fu,-t)$, $t\in\mathbb R_-^n=-\mathbb R_+^n$,
is convex and increasing in each $t_k$, so
$\psi_\fu(\log|z_1|,\ldots,|z_n|)$ can be extended (in a unique way)
to a function $\Psi_\fu(z)$ plurisubharmonic in the unit polydisk
$\D^n\subset\Cn$, the {\it (local) indicator} of $\fu$ at $0$, see \cite{LeRa}. Observe
that
\begin{equation}\label{eq:psi+}
\Psi_{\fu+ \fv}=\Psi_\fu + \Psi_\fv.
\end{equation}

The indicators have the log-homogeneity
property
$$\Psi_\fu(z_1,\ldots,z_n)=\Psi_\fu(|z_1|,
\ldots,|z_n|)= c^{-1}\Psi_\fu(|z_1|^c,\ldots,|z_n|^c) \quad \forall
c>0,$$
and any nonpositive plurisubharmonic function $\Phi$ in $\D^n$ with this property is called an {\it indicator},
which is justified by the relation $\Psi_\Phi=\Phi$. The collection of the indicators constitutes a convex cone.

The homogeneity implies $(dd^c\,\Psi_\fu)^n=0$
on $\{\Psi_\fu>-\infty\}$, so if $\Psi_\fu$ is locally bounded outside
$0$, then $(dd^c\,\Psi_\fu)^n=N_\fu\delta_0$ for some $N_\fu\ge 0$ (the {\it Newton number} of $\fu$), and $N_\fu=0$ if
and only if $\Psi_\fu\equiv 0$ ($\delta_0$ being Dirac's
$\delta$-function at $0$).

The indicators are plurisubharmonic characteristics of plurisubharmonic singularities:
\begin{equation}\label{bound} u(z)\le \Psi_u(z)+O(1).\end{equation}
When $u$ has isolated singularity at $0$, this implies (by
Demailly's comparison theorem \cite{D}) a relation between the
Monge-Amp\`ere measures:
$$(dd^cu)^n\ge (dd^c\,\Psi_u)^n=N_u\delta_0.$$

\medskip

Due to the homogeneity, the convex image
$\psi_\fu(t)=\Psi_\fu(e^{t_1},\ldots,e^{t_n})$ of
the indicator $\Psi_\fu$ coincides with the support function to the
convex set
$$\Gamma_\fu=\{b\in\overline{\mathbb R_+^n}:\: \psi_\fu(t)\ge\langle b,t \rangle\
\forall t\in\mathbb R_-^n\},$$  that
is, $$\psi_\fu(t)=\sup\,\{\langle t, a\rangle: \: a\in \Gamma_\fu\}.$$
We will call the set $\Gamma_\fu$ by {\it indicator diagram} of $\fu$. For $\fu=\cl(\log|f|)$ this is precisely the Newton polyhedron of the function $f=\sum c_Jz^J$ at $0$,  i.e., the convex hull of the set $\{J+\Rnp:\: c_J\neq 0\}$, see (\ref{index}).

\medskip

Let $\CCC$ be the collection of all closed convex subsets $\Gamma$ of $\mathbb R_+^n$ that are {\it complete} in the following sense: $a\in\Gamma\Rightarrow a+\mathbb R_+^n\subset\Gamma$.
We have just established an isomorphism between the cone of
the indicators and the cone $\CCC$ endowed with Minkowski's addition
$$\Gamma_1+\Gamma_2=\{a+b:\: a\in\Gamma_1,\ b\in\Gamma_2\}.$$
By (\ref{eq:psi+}) and the corresponding property of the support function,
\begin{equation}\label{eq:sumgamma}\Gamma_{\fu+ \fv}=\Gamma_\fu + \Gamma_\fv.\end{equation}
Note also that the Newton number $N_\fu$ of an isolated singularity $\fu$ can be computed as  \begin{equation}\label{eq:volume} N_\fu=n!\,{\rm Vol}(\Rnp\setminus\Gamma_\fu),\end{equation}
see \cite{R}.

\section{Extreme singularities}

\begin{defin} We say that a singularity $\fu\in\PSHS_0$ is {\it extreme} if the relation $\fu=\fu_1+\fu_2$ for $\fu_i\in\PSHS_0$ implies $\fu_i=\lambda_i\fu$ with $\lambda_i\ge0$.
\end{defin}

In terms of germs, this means that the relation $u=u_1+u_2+ O(1)$ for $u_i\in\PSHG_0$ implies $u=\lambda_iu_i+O(1)$ with $\lambda_i\ge0$.

\medskip

\subsection{Indicator diagram test}

We are going to check the singularities by means of their indicator diagrams.

\medskip

\begin{defin} A set $K\subset\CCC$ is called {\it decomposable} if there exist sets $K_1, K_2\in\CCC$, non-homothetic to $K$, such that $K=K_1+K_2$. (A set $A\in\CCC$ is {\it homothetic} to $B$ if $A=cB+x$ for $c\ge 0$ and $x\in\Rnp$.)
\end{defin}

For the case of arbitrary convex polyhedra in $\Rn$, this notion has been extensively studied, see e.g. \cite{Sha}, \cite{She}, \cite{Mey}, \cite{Kal}, \cite{Mcm}, \cite{Smi} where a number of results on (in)decomposability of polyhedra are obtained. Decomposability of Newton polyhedra (with application to reducibility of polynomials and analytic functions) was considered, for example, in \cite{Sha}, \cite{Lip}, \cite{GS}. Observe that for such an application one does not exclude homothetic polyhedra, while we have to do that in order to treat extreme singularities. Our definition is thus closer to that from \cite{Smi}.

Note that a polyhedron is decomposable in the class of all convex sets in $\CCC$ if and only if it is decomposable in the class of convex polyhedra in $\CCC$. A straightforward example of an indecomposable set in $\CCC$ is
$$ \Gamma_a=\{x\in\overline{\Rnp}:\: \langle x,a\rangle\ge1\},\quad a\in\Rnp.$$

Relation (\ref{eq:sumgamma}) makes one hope that there should be a strong connection between the extremity and indecomposability. However the things are not that simple.

\begin{exa}\label{ex:Nd}
The Newton diagram of the function
\begin{equation}\label{eq:decomp1} u=\log(|z_1^3| + |z_1^3+z_1^2z_2|+|z_1^2+z_1z_2|+|z_1^2+2z_1z_2+z_2^2|)\end{equation}
is $\Gamma_{(2,2)}$ and therefore is indecomposable. At the same time, $u$ is not extreme, just because $u=\log(|z_1|+|z_2|) + \log(|z_1^2|+|z_2|)$.
\end{exa}

The property of being extreme is obviously coordinate independent, while Newton polyhedra are very sensitive to the choice of coordinates, For instance, under the linear transform $\zeta_1=z_1$, $\zeta_2=z_1+z_2$, the function $u$ for Example~\ref{ex:Nd} turns to
\begin{equation}\label{eq:decomp2} v(\zeta)= \log(|\zeta_1^3| + |\zeta_1^2\zeta_2|+|\zeta_1\zeta_2|+|\zeta_2^2|)\end{equation}
whose indicator diagram $\Gamma_v$ is
generated by the points $(3,0)$, $(1,1)$, and $(0,2)$, so it equals the sum $\Gamma_{(1,1)}+\Gamma_{(2,1)}$, none of which being homothetic to $\Gamma_v$.

In addition, we refer to the well-known problem of existence of isolated singularities $\varphi$ that have zero Lelong number but nonzero residual Monge-Amp\`ere mass at $0$. The indicator of such a singularity is identical zero, so the indicator diagram of any $\fu\in\PSHS_0$ coincides with that of the non-extreme function $u+\varphi$.

\medskip

To avoid these problems, we restrict ourselves to a subclass of the singularities.
According to \cite{R4}, a function $u\in\PSHG_0$ is {\it almost homogeneous} if $\Psi_u\in \cl(u)$, that is, when the inequality in (\ref{bound}) becomes an equality. This means that one can always find a homogeneous representative $\Psi_u$ of $\cl(u)$, so we call it a {\it homogeneous singularity}.

As was proved in \cite{R7}, a function $u\in\PSHG_0$ with isolated singularity is almost homogeneous if and only if its residual mass $(dd^cu)^n(0)$ coincides with that of its indictor. By Kouchnirenko's theorem (\cite{Ku1}, \cite{AYu}), the latter is true for $u=\log|F|$, where $F$ is a generic holomorphic mapping with a given Newton polyhedron. Other examples of almost homogeneous functions can be found in \cite{R}, \cite{R4}.

\begin{theorem}\label{theo:main}
A homogeneous singularity is extreme if and only if its indicator diagram is indecomposable.
\end{theorem}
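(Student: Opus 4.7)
The plan is to exploit the bijection, established above, between homogeneous singularities $\fu$, indicators $\Psi_\fu$, and indicator diagrams $\Gamma_\fu \in \CCC$, under which summation in $\PSHS_0$ corresponds to Minkowski addition in $\CCC$ by (\ref{eq:sumgamma}). Both directions then reduce to a translation of the problem from $\PSHS_0$ to $\CCC$.

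For the implication ``extreme $\Rightarrow$ indecomposable'' I would argue contrapositively. Suppose $\Gamma_\fu = \Gamma_1 + \Gamma_2$ with $\Gamma_i \in \CCC$ non-homothetic to $\Gamma_\fu$. Let $\Psi_i$ be the indicator with diagram $\Gamma_i$, and set $\fu_i := \cl(\Psi_i)$. By (\ref{eq:sumgamma}), $\fu_1 + \fu_2$ has diagram $\Gamma_\fu$, so its indicator equals $\Psi_\fu$; since $\fu$ is homogeneous, this upgrades to $\fu_1 + \fu_2 = \fu$. If some $\fu_i$ were equal to $\lambda_i\fu$, uniqueness of the indicator within a singularity class would give $\Psi_i = \lambda_i\Psi_\fu$, hence $\Gamma_i = \lambda_i\Gamma_\fu$---contradicting the non-homotheticity.

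For ``indecomposable $\Rightarrow$ extreme'', suppose $\fu = \fu_1 + \fu_2$ in $\PSHS_0$ with representatives $u_i \in \fu_i$. First I would show that each $\fu_i$ inherits homogeneity from $\fu$: setting $f_i := \Psi_{u_i} - u_i$, (\ref{bound}) gives $f_i \ge -C$ for some constant, while summing and using $\Psi_u = \Psi_{u_1} + \Psi_{u_2}$ together with the homogeneity of $\fu$ yields $f_1 + f_2 = O(1)$, which forces each $f_i$ to be bounded. Then (\ref{eq:sumgamma}) gives $\Gamma_\fu = \Gamma_{\fu_1} + \Gamma_{\fu_2}$, and indecomposability forces one piece---say $\Gamma_{\fu_1}$---to be homothetic to $\Gamma_\fu$: $\Gamma_{\fu_1} = c_1\Gamma_\fu + x_1$ with $c_1 \ge 0$, $x_1 \in \Rnp$.

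The delicate final step is to eliminate the translation $x_1$. Passing to support functions on $\Rnm$ gives $\psi_{\fu_2}(s) = (1-c_1)\psi_\fu(s) - \langle x_1,s\rangle$, and the requirement $\Gamma_{\fu_2}\subset\Rnp$ (equivalently $\psi_{\fu_2}\le 0$ on $\Rnm$) translates, for $0 \le c_1 < 1$, into the inclusion $\Gamma_\fu \subset (1-c_1)^{-1}x_1 + \Rnp$; when $\fu$ is a homogeneous singularity with singular set reduced to $\{0\}$, so that $\inf_{a\in\Gamma_\fu} a_k = 0$ for every $k$, this forces $x_1 = 0$. The case $c_1 = 1$ collapses to the trivial decomposition $\fu_1 = \fu$, $\fu_2 = 0$, and $c_1 > 1$ is incompatible with $\psi_{\fu_2} \le 0$. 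Once $x_1 = 0$ we have $\Gamma_{\fu_i} = c_i\Gamma_\fu$ with $c_1 + c_2 = 1$, hence $\Psi_{\fu_i} = c_i\Psi_\fu$ and, by the homogeneity of each $\fu_i$ already established, $\fu_i = c_i\fu$. I expect this elimination of $x_1$ to be the principal obstacle, since the definition of ``homothetic'' admits nontrivial translations by vectors in $\Rnp$ that must be ruled out via the geometric structure of the indicator diagram.
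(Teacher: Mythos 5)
Your approach is the same as the paper's: translate both ``extreme'' and ``decomposable'' through the isomorphism between homogeneous singularities, indicators, and the cone $\CCC$ with Minkowski addition, using (\ref{eq:sumgamma}) and the boundedness argument via (\ref{bound}) to show that any summand of a homogeneous singularity is again homogeneous. The published proof is essentially your first two paragraphs and then stops: it asserts that ``the representation is possible with $\Psi_{u_i}=\lambda_i\Psi$ only ... exactly means that $\Gamma_\fu$ is indecomposable,'' silently identifying homothety with pure scaling.

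Where you genuinely differ is in taking the paper's stated definition of homothety---which allows a translation by $x\in\Rnp$---at face value and then trying to eliminate $x_1$. You are right that this is a real gap between what the proof needs ($\Gamma_{\fu_i}=\lambda_i\Gamma_\fu$) and what indecomposability literally gives ($\Gamma_{\fu_1}=c_1\Gamma_\fu+x_1$). However, your elimination argument proves less than the theorem claims: you invoke the hypothesis that $\fu$ has singular set $\{0\}$ so that $\inf_{a\in\Gamma_\fu}a_k=0$ for all $k$, but the theorem is stated for arbitrary homogeneous singularities, including non-isolated ones such as $\cl(\log|z_1|)$ in $\C^2$, whose diagram $(1,0)+\Rnp$ has $\inf a_1=1>0$. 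For such $\Gamma_\fu$ your step collapses. In that particular case the theorem is saved because $\Gamma_\fu$ is a translate of $\Rnp$ along a single coordinate axis, so any homothety $c\Gamma_\fu+x$ that keeps the complementary piece inside $\overline{\Rnp}$ actually collapses to a pure scaling $(c+x_k/y_k)\Gamma_\fu$; and if instead $\inf a_k>0$ for some $k$ while $\Gamma_\fu$ is not such an axis-translate of $\Rnp$, then the decomposition $\Gamma_\fu=(\Gamma_\fu-\epsilon e_k)+(\epsilon e_k+\Rnp)$ already witnesses decomposability, so that case never occurs under the indecomposability hypothesis. You should either add this case analysis, or state explicitly that you are treating the isolated case (as the surrounding discussion in the paper---Kouchnirenko's theorem, residual masses---suggests is the case of interest). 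As written, your proof is correct in spirit and more honest than the paper's one-line finish, but it is not yet a complete proof of the statement as formulated.
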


\begin{proof} Let ${\mathfrak u}$ be a homogeneous singularity and $\Psi$ be its indicator representative. Then $\Psi=u_1+u_2+O(1)$ with $u_i\in\PSHG_0$ if and only if
\begin{equation}\label{eq:repr} \Psi=\Psi_{u_1}+ \Psi_{u_2}.\end{equation}
Since $u_i\le\Psi_{u_i}+O(1)$, one has then $u_i=\Psi_{u_i}+O(1)$, so the singularities $\cl(u_i)$ are homogeneous as well. Therefore, $\fu$ is extreme if and only if the representation (\ref{eq:repr}) is possible with $\Psi_{u_i}=\lambda_i\Psi$ only, which exactly means that $\Gamma_\fu$ is indecomposable.
\end{proof}

\medskip

\begin{rem} The function $v$ defined by (\ref{eq:decomp2}) is almost homogeneous because $v\ge\Psi_v$, so the decomposability of its indicator diagram reflects perfectly the non-extremity of $v$. In contrast to that, the function $u$ from (\ref{eq:decomp1}) has indecomposable indicator diagram, however it is not extreme. This is caused by the fact that $u$ is not almost homogeneous, which can be checked by a direct computation of the residual Monge-Amp\`ere masses by means of (\ref{eq:volume}): $(dd^cu)^n(0)=(dd^cv)^n(0)=(dd^c\Psi_v)^n(0)=6$, while $(dd^c\Psi_u)^n(0)=5$.
\end{rem}

\subsection{Additive types}

Another class of extreme singularities comes from the notion of relative type (\ref{eq:rtype}). As follows from the definition, type with respect to any maximal singularity $\vph$ satisfies $\sigma(\sum\fu_i,\vph)\ge \sum\sigma(\fu_i,\vph)$.

\begin{defin} We will say that a maximal singularity $\vph$ is {\it additive} if
$$ \sigma\left(\sum\fu_i,\vph\right)= \sum\sigma(\fu_i,\vph)\quad {\forall } \fu_i\in\PSHS_0.$$
\end{defin}

For example, {\it flat weights} considered in \cite{R8} possess this property; in particular, such are the simplicial singularities $\phi_a$ and $\vph=\log(|z_1|^s+|f|)$ in $\C^2$, $s>0$, where $f$ is any irreducible holomorphic function whose zero set is transverse to $\{z_1=0\}$ and the multiplicity at $0$ is at most $s$, see \cite{FaJ}. More generally, all plurisubharmonic weights generating {\it quasimonomial valuations} on $\Cn$, see \cite{BFaJ} and \cite{R9}, are additive.

\begin{theorem}\label{theo:main2} Any additive maximal singularity is extreme.
\end{theorem}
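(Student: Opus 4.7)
The plan is to deduce extremality of $\cl(\vph)$ directly from additivity of $\sigma(\cdot,\vph)$ combined with the upper bound (\ref{eq:rbound}). Suppose $\cl(\vph)=\fu_1+\fu_2$ with $\fu_i\in\PSHS_0$, so that $\vph=u_1+u_2+O(1)$ for some representatives $u_i\in\fu_i$. My goal is to show that each $\fu_i$ is a nonnegative multiple of $\cl(\vph)$.

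First, I would set $\lambda_i:=\sigma(\fu_i,\vph)$. Additivity applied to $\cl(\vph)=\fu_1+\fu_2$, together with the tautological identity $\sigma(\cl(\vph),\vph)=1$, gives $\lambda_1+\lambda_2=1$. To see that each $\lambda_i\ge 0$, I would replace $u_i$ by $u_i-\sup_U u_i$ on a fixed neighborhood $U$ of $0$, so that $u_i\le 0$ on $U$; since $\vph$ is a maximal singularity it tends to $-\infty$ at $0$, and thus $\vph<0$ near $0$. Hence the ratio $u_i/\vph$ is nonnegative near $0$, and $\lambda_i\ge 0$.

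Next, I would invoke (\ref{eq:rbound}) for each summand, obtaining $u_i\le \lambda_i\vph+O(1)$. Summing these two inequalities and using $\lambda_1+\lambda_2=1$ yields
\[
\vph+O(1)=u_1+u_2\le(\lambda_1+\lambda_2)\vph+O(1)=\vph+O(1),
\]
so the two inequalities from (\ref{eq:rbound}) are simultaneously tight, up to a bounded error in the sum. A short pinching finishes the job: setting $e_i:=\lambda_i\vph-u_i$, each $e_i$ is bounded below by (\ref{eq:rbound}), while $e_1+e_2=O(1)$; hence each $e_i$ is also bounded above, and therefore bounded. This gives $u_i=\lambda_i\vph+O(1)$, which passes to singularity classes as $\fu_i=\lambda_i\cl(\vph)$, proving extremality.

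I do not expect any serious obstacle: once additivity supplies the identity $\lambda_1+\lambda_2=1$, the argument is essentially an elementary pinching. The only point requiring a little care is the nonnegativity of the $\lambda_i$, which rests on the fact that $\vph$ is a genuine singularity (so $\vph<0$ near $0$); this is automatic from maximality in the sense used here, since a maximal plurisubharmonic function on a punctured neighborhood that extends to a singularity at $0$ must tend to $-\infty$ there.
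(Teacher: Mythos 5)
Your proof is correct, and it takes a genuinely cleaner route than the paper's. Both arguments start from the same two inputs: the additivity identity $\lambda_1+\lambda_2=\sigma(\cl(\vph),\vph)=1$ and the upper bound $u_i\le\lambda_i\vph+O(1)$ from (\ref{eq:rbound}). But the paper then splits into cases depending on whether some $\sigma_i$ vanishes, and in the nondegenerate case introduces the auxiliary function $v=\max_i u_i/\sigma_i$, shows $v\in\cl(\vph)$ by combining (\ref{eq:rbound}) with the convexity inequality $\max\ge\sigma_1(u_1/\sigma_1)+\sigma_2(u_2/\sigma_2)$, and finally argues by contradiction along a sequence $z_k\to0$. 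Your pinching argument replaces all of this with one line: set $e_i=\lambda_i\vph-u_i$; each $e_i$ is bounded below by (\ref{eq:rbound}), their sum $e_1+e_2=\vph-(u_1+u_2)$ is bounded, hence each $e_i$ is bounded, and $u_i=\lambda_i\vph+O(1)$ follows at once, with no case distinction and no auxiliary envelope. The explicit verification that $\lambda_i\ge0$ (normalize $u_i\le0$ and use $\vph<0$ near the singular point) is something the paper leaves implicit; it is routine but worth stating. In short, the two proofs use the same ingredients, but yours exposes that the max construction and the sequence argument are unnecessary: additivity plus (\ref{eq:rbound}) already force equality through a simple sandwich.
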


\begin{proof} Let $\vph$ be a fixed representative of the given additive maximal singularity, and assume
\begin{equation}\label{eq:summa}
\vph=u_1+u_2+O(1)
\end{equation}
Denote $\sigma_i=\sigma(u_i,\vph)$, then the additivity gives us
$\sigma_1+\sigma_2=1$. If $\sigma_1=0$, then $\sigma_2=1$ and the bound (\ref{eq:rbound}) implies $u_2\le\vph+O(1)$. In view of (\ref{eq:summa}) we have, in addition, $\vph\le u_2+O(1)$, so $u_2\in\cl(\vph)$, which proves the assertion for this case.

Now we can assume $\sigma_1>0$ and $\sigma_2=1-\sigma_1>0$. Denote $$v=\max_i\frac{u_i}{\sigma_i},$$
then $\sigma(v,\vph)=1$, so $v\le\vph+O(1)$. On the other hand,
$$v=\max_i\frac{u_i}{\sigma_i}\ge \sigma_1\frac{u_1}{\sigma_1}+(1-\sigma_1)\frac{u_2}{\sigma_2}=u_1+u_2=\vph+O(1),$$
so $v\in\cl(\vph)$. We claim that this implies $u_1/\sigma_1=u_2/\sigma_2+O(1)$. Assuming the contrary, there exists a sequence of points $z_k\to 0$ such that, for example,  $u_1(z_k)/\sigma_1-u_2(z_k)/\sigma_2=A_k\to\infty$. Therefore,
$$v(z_k)=\frac{u_1(z_k)}{\sigma_1}=\frac{\sigma_1u_1(z_k)+\sigma_2u_1(z_k)}{\sigma_1}=u_1(z_k)+u_2(z_k)+\sigma_2A_k, $$
which contradicts $v\in\cl(\vph)$.

Therefore, $v=u_i/\sigma_i +O(1)$, so $u_i\in\sigma_i\,\cl(\vph)$.
\end{proof}

\section{Extreme plurisubharmonic functions}

The pluricomplex Green function of a bounded hyperconvex domain $\Omega$ for a maximal singularity $\fu\in\PSHS_p$ was introduced in \cite{Zah} (in the case of continuous singularity) and in \cite{R7} (in the general case) as
$$G_{\fu}=\sup\{v\in\PSH^-(\Omega):\: v\in\fu\}.$$
It is maximal on $\Omega\setminus\{p\}$ and $G_{\fu}\in\fu$, and this is the unique plurisubharmonic function with these properties.

When the singularity is homogeneous with a representative $u=\Psi(\cdot+p)$ for a given indicator $\Psi$, this coincides with the function introduced in \cite{LeRa} as the upper envelope of negative plurisubharmonic functions $v$ in $\Omega$ such that $\Psi_{v(\cdot-p)}\le \Psi$.
 When $\Psi=\log|z|$, this produces the standard pluricomplex Green functions $G_p$ with pole at $p$.

In \cite{CeThor}, the classical pluricomplex Green functions were shown to be extreme: $G_p=u_1+u_2$ for $u_1,u_2\in\PSH^-(\Omega)$ implies $u_i=\lambda_iG_p$, $\lambda_i\ge 0$.

\begin{theorem} The pluricomplex Green function for an extreme maximal singularity $\fu$ at $p\in\Omega$ is extreme. When the singularity is homogeneous, $\Omega=\D^n$ and $p=0$, the converse is true as well.
\end{theorem}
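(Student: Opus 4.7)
My plan is to prove the two implications separately, leaning on the supremum characterization of the Green function in both directions.

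For the forward direction, suppose $\fu$ is an extreme maximal singularity at $p$ and $G_{\fu}=u_1+u_2$ with $u_i\in\PSH^-(\Omega)$. Passing to singularity classes near $p$ gives $\fu=\cl(u_1)+\cl(u_2)$, and extremity of $\fu$ yields $\cl(u_i)=\lambda_i\fu$ with $\lambda_i\ge0$ and $\lambda_1+\lambda_2=1$. Maximality of $\fu$ passes to $\lambda_i\fu$, and $\lambda_i G_{\fu}$ lies in $\PSH^-(\Omega)$, is maximal on $\Omega\setminus\{p\}$, and represents $\lambda_i\fu$; the uniqueness clause in the definition of the Green function therefore gives $G_{\lambda_i\fu}=\lambda_iG_{\fu}$. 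When $\lambda_i>0$, $u_i$ itself is a competitor in the supremum defining $G_{\lambda_i\fu}$, so $u_i\le\lambda_iG_{\fu}$; adding these two bounds gives $G_{\fu}=u_1+u_2\le G_{\fu}$, and equality forces $\lambda_iG_{\fu}-u_i\equiv 0$ on $\{G_{\fu}>-\infty\}$ and trivially on the polar set. If instead $\lambda_1=0$, then $u_1$ is bounded near $p$ and $u_2=G_{\fu}-u_1$ is a representative of $\fu$ in $\PSH^-(\Omega)$ with $u_2\ge G_{\fu}$ (since $u_1\le 0$); the supremum property supplies the reverse inequality, so $u_2=G_{\fu}$ and $u_1\equiv 0$.

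For the converse, assume $\fu$ is homogeneous on $\D^n$ with indicator $\Psi$. The description of $G_{\fu}$ recalled from \cite{LeRa} as the upper envelope of negative plurisubharmonic $v$ on $\D^n$ with $\Psi_v\le\Psi$, combined with the property $\Psi_{\Psi}=\Psi$, identifies $G_{\fu}=\Psi$. Given a decomposition $\fu=\fu_1+\fu_2$, formula (\ref{eq:psi+}) gives $\Psi=\Psi_{\fu_1}+\Psi_{\fu_2}$, exhibiting $G_{\fu}$ as a sum of two elements of $\PSH^-(\D^n)$. Extremity of $G_{\fu}$ as a plurisubharmonic function forces $\Psi_{\fu_i}=\lambda_i\Psi$ with $\lambda_i\ge0$, and the argument used inside the proof of Theorem~\ref{theo:main} (combining $u_i\le\Psi_{\fu_i}+O(1)$ with $u_1+u_2=\Psi+O(1)$ for any representatives $u_i\in\fu_i$) then upgrades the indicator identity to $\fu_i=\lambda_i\fu$.

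The delicate point is the forward direction: verifying that $u_i=\lambda_iG_{\fu}$ holds pointwise on all of $\Omega$ rather than merely up to $O(1)$, and isolating the degenerate case $\lambda_i=0$ where the rescaling $G_{\lambda_i\fu}=\lambda_iG_{\fu}$ collapses and must be replaced by a direct use of the supremum property. The conceptual heart of the converse is the identification $G_{\fu}=\Psi$ on $\D^n$, which transports the singularity-level additivity of indicators into a function-level decomposition accessible to the hypothesis that $G_{\fu}$ is extreme.
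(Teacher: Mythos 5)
Your proof is correct, but the forward direction takes a genuinely different route from the paper's, so a comparison is in order. The paper's proof of the forward implication begins by observing that each summand $u_i$ in $G_\fu=u_1+u_2$ is itself maximal on $\Omega\setminus\{p\}$ (this tacitly invokes the superadditivity of the complex Monge--Amp\`ere operator: if $(dd^c(u_1+u_2))^n=0$ on an open set where the $u_i$ are locally bounded, then $(dd^cu_i)^n=0$ there) and has boundary value $0$, so that $u_i=G_{\cl(u_i)}$ outright; extremity of $\fu$ then gives $\cl(u_i)=\lambda_i\fu$, and the uniqueness of the Green function yields $G_{\cl(u_i)}=\lambda_iG_\fu$, with no case distinction needed. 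You instead bypass the maximality-of-summands step entirely: after getting $\cl(u_i)=\lambda_i\fu$ with $\lambda_1+\lambda_2=1$ from extremity, you use only the supremum description of the Green function to obtain the one-sided bound $u_i\le\lambda_iG_\fu$ when $\lambda_i>0$, and then force equality by summing; the price you pay is the separate treatment of $\lambda_i=0$, which you handle correctly. Your approach is slightly more elementary (it avoids any appeal to maximality propagating to summands), whereas the paper's is more uniform and identifies each $u_i$ as a Green function at the outset. For the converse, the two arguments coincide in substance: you unpack the reasoning behind Theorem~\ref{theo:main} directly (indicator additivity (\ref{eq:psi+}) together with the bound (\ref{bound}) and the identity $G_\fu=\Psi$ on $\D^n$), while the paper simply cites Theorem~\ref{theo:main} as a black box.

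One small point worth stating explicitly in your forward argument: to invoke $G_{\lambda_i\fu}=\lambda_iG_\fu$ you need $\lambda_i\fu$ to be a maximal singularity, which is clear for $\lambda_i>0$ since maximality is scale-invariant; alternatively you could simply note that $u_i/\lambda_i\in\PSH^-(\Omega)$ represents $\fu$, hence $u_i/\lambda_i\le G_\fu$ by the defining supremum, reaching the same inequality $u_i\le\lambda_iG_\fu$ without passing through $G_{\lambda_i\fu}$ at all.
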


\begin{proof} Let $G_{\fu}=u_1+u_2$, then each $u_j$ is maximal on $\Omega\setminus\{p\}$ and equals $0$ on $\partial\Omega$, which implies $u_i=G_{\cl(u_i)}$. In addition, $\cl(u_1)+\cl(u_2)=\fu$, so $\cl(u_i)=\lambda_i\fu$, and again the uniqueness theorem yields $G_{\cl(u_i)}=\lambda_iG_\fu$.

The second assertion follows immediately from Theorem~\ref{theo:main} and the observation that, in the case of $p=0\in\D^n=\Omega$, one has $G_\Psi=\Psi$.
\end{proof}

\medskip

For an arbitrary domain $\Omega$, non-extremity of a singularity does not imply non-extremity of the Green function. For example, the indicator of the function $v$ defined by (\ref{eq:decomp2}) (i.e., its Green function for $\D^2$) is $$\Psi_v(z)=\max\{3\log|z_1|, 2\log|z_1|+\log|z_2|,\log|z_1|+\log|z_2|,2\log|z_2|\}$$ and it represents as the sum of $\Psi_1$ and $\Psi_2$ with
$ \Psi_1(z)=\max\{\log|z_1|,\log|z_2|\}$ and $ \Psi_2(z)=\max\{2\log|z_1|,\log|z_2|\}$.

On the other hand, let $G_v$ be its Green function in the unit ball $\BB_2$. Assume that $G_v=v_1+v_2$ for $v_i\in\PSH^-(\BB_2)$. Then $\Psi_v=\Psi_{v_1}+\Psi_{v_2}$ and we again, as in the proof of Theorem~\ref{theo:main}, conclude that both $v_1$ and $v_2$ are almost homogeneous (since $v$ is so) and moreover, since $\Gamma_v$ has a unique decomposition into the sum of two diagrams (which checks directly), $\Psi_{v_1}$ equals either $\Psi_1$ or $\Psi_2$. Therefore, $$G_{\Psi_{v_i}}= G_{v_i}\ge v_i,$$
so $G_v=G_{\Psi_1}+G_{\Psi_2}$. Substituting here $G_{\Psi_1}=\log|z|$ and a known formula for $G_{v_2}$ from \cite{RSig2}, we see that the sum does not satisfy the homogeneous Monge-Amp\`ere equation outside $0$.

\medskip

This observation leads us to the following

\medskip

{\rm\bf Conjecture.} {\sl Any plurisubharmonic solution to the Dirichlet problem in the unit ball $\BB_n$ $$(dd^c u)^n=\delta_p, \quad u|_{\partial\BB_n}=0,$$ is an extreme plurisubharmonic function.}

\vskip1cm

Tek/Nat, University of Stavanger, 4036 Stavanger, Norway

\vskip0.1cm

{\sc E-mail}: alexander.rashkovskii@uis.no

\end{document}